\documentclass[british]{amsart}

\usepackage{amstext}
\usepackage{amsthm}

\makeatletter
%%%%%%%%%%%%%%%%%%%%%%%%%%%%%% Textclass specific LaTeX commands.
\numberwithin{equation}{section}
\numberwithin{figure}{section}
\theoremstyle{plain}
\newtheorem{thm}{\protect\theoremname}[section]
\theoremstyle{plain}
\newtheorem{prop}[thm]{\protect\propositionname}
\theoremstyle{plain}
\newtheorem{lem}[thm]{\protect\lemmaname}

%%%%%%%%%%%%%%%%%%%%%%%%%%%%%% User specified LaTeX commands.
\usepackage{alex}

\makeatother

\usepackage{babel}
\addto\captionsbritish{\renewcommand{\lemmaname}{Lemma}}
\addto\captionsbritish{\renewcommand{\propositionname}{Proposition}}
\addto\captionsbritish{\renewcommand{\theoremname}{Theorem}}
\addto\captionsenglish{\renewcommand{\lemmaname}{Lemma}}
\addto\captionsenglish{\renewcommand{\propositionname}{Proposition}}
\addto\captionsenglish{\renewcommand{\theoremname}{Theorem}}
\providecommand{\lemmaname}{Lemma}
\providecommand{\propositionname}{Proposition}
\providecommand{\theoremname}{Theorem}

\begin{document}
\title{On Fillmore's theorem over integrally closed domains}
\author{Alexander Stasinski}
\begin{abstract}
A well-known theorem of Fillmore says that if $A\in\M_{n}(K)$ is
a non-scalar matrix over a field $K$ and $\gamma_{1},\dots,\gamma_{n}\in K$
are such that $\gamma_{1}+\dots+\gamma_{n}=\Tr(A)$, then $A$ is
$K$-similar to a matrix with diagonal $(\gamma_{1},\dots,\gamma_{n})$.
Building on work of Borobia, Tan extended this by proving that if
$R$ is a unique factorisation domain with field of fractions $K$
and $A\in\M_{n}(R)$ is non-scalar, then $A$ is $K$-similar to a
matrix in $\M_{n}(R)$ with diagonal $(\gamma_{1},\dots,\gamma_{n})$.
We note that Tan's argument actually works when $R$ is any integrally
closed domain and show that the result cannot be generalised further
by giving an example of a matrix over a non-integrally closed domain
for which the result fails. 

Moreover, Tan gave a necessary condition for $A\in\M_{n}(R)$ to be
$R$-similar to a matrix with diagonal $(\gamma_{1},\dots,\gamma_{n})$.
We show that when $R$ is a PID and $n\geq3$, Tan's condition is
also sufficient.
\end{abstract}

\address{\selectlanguage{english}%
Department of Mathematical Sciences, Durham University, Durham, DH1
3LE, UK}
\email{\selectlanguage{english}%
alexander.stasinski@durham.ac.uk}
\maketitle

\section{Introduction}

For a commutative ring $R$, we say that $A,B\in\M_{n}(R)$ are \emph{$R$-similar}
if there exists a $g\in\GL_{n}(R)$ such that $gAg^{-1}=B$. Fillmore's
theorem on matrices over fields \cite[Theorem~2]{Fillmore} is the
statement that for any non-scalar matrix $A\in\M_{n}(K)$ over a field
$K$ and any $\gamma_{1},\dots,\gamma_{n}\in K$ such that $\gamma_{1}+\dots+\gamma_{n}=\Tr(A)$,
$A$ is $K$-similar to a matrix with diagonal $(\lambda_{1},\dots,\lambda_{n})$.
Despite its simple proof, Fillmore's theorem is a widely cited result
in matrix theory and has by now found its way into textbooks \cite[Theorem~1.5]{Zhan-Matrix-theory}.

In \cite{Borobia_Fillmore}, Borobia proved a variant of Fillmore's
theorem for matrices over $\Z$, namely that for any non-scalar $A\in\M_{n}(\Z)$
and any $\gamma_{1},\dots,\gamma_{n}\in\Z$ such that $\gamma_{1}+\dots+\gamma_{n}=\Tr(A)$,
$A$ is $\Q$-similar to a matrix $\M_{n}(\Z)$ with diagonal $(\lambda_{1},\dots,\lambda_{n})$.
Note that this is a statement about $\Q$-similarity, not $\Z$-similarity.
Borobia's proof in fact goes through essentially word for word for
any PID $R$ instead of $\Z$ (with $\Q$ replaced by the field of
fractions of $R$) and as such is a generalisation of Fillmore's theorem.
The main step in Borobia's proof is to show that any non-scalar matrix
$A\in\M_{n}(\Z)$ is $\Q$-similar to a matrix in $\M_{n}(\Z)$ with
an off-diagonal entry equal to $1$. Generalising this, Tan \cite{Tan_Fillmore}
proved that if $\Z$ is replaced by a unique factorisation domain
(UFD), aka a factorial ring, $R$ and $\Q$ by the field of fractions
$K$, then the rational canonical form implies that any non-diagonal
$A\in\M_{n}(R)$ is $K$-similar to a matrix in $\M_{n}(R)$ with
an off-diagonal entry equal to $1$. This holds because the characteristic
polynomial $\chi_{A}(x)$ of $A$ is in $R[x]$ and thus, since
$R$ is a UFD, any monic factor of $\chi_{A}(x)$ (considered as polynomials
in $K[x]$), lies in $R[x]$. Therefore every invariant factor of
$A$ is in $R[x]$ and hence the rational canonical form of $A$ (under
$K$-similarity) is in $\M_{n}(R)$.

The purpose of this note is twofold. First, we record the fact that
Tan's argument goes through when $R$ is any integrally closed domain,
because under this assumption, it is still true that any monic factor
of $\chi_{A}(x)$ lies in $R[x]$. Nevertheless, it is not immediately
obvious whether the Borobia--Tan generalisation of Fillmore's theorem
holds when $R$ is an integral domain that is not integrally closed.
We give a counter-example when $R=\Z[\sqrt[3]{16}]$, showing that
the result cannot be generalised further.

Our second aim is concerned with another generalisation of Fillmore's
theorem. The Borobia-Tan generalisation allows $K$-similarity, where
$K$ is the field of fractions of an integrally closed domain $R$, as long as
the resulting matrix has all its entries in $R$. A perhaps more natural,
and much more difficult, question is to what extent Fillmore's theorem
holds when only $R$-similarity is allowed, that is, given an integrally closed domain and a non-scalar matrix $A=(a_{ij})\in\M_{n}(R)$,
when is it true that for all elements $\gamma_{1},\dots,\gamma_{n}\in R$
such that $\gamma_{1}+\dots+\gamma_{n}=\Tr(A)$, $A$ is $R$-similar
to a matrix with diagonal $(\gamma_{1},\dots,\gamma_{n})$? This question
was raised, for $R$ a UFD, by Tan \cite[Remark~2.3]{Tan_Fillmore}.
Very little is known about $R$-similarity of matrices in this generality, but when $R$
is a PID, we show that the question has a positive answer as soon
as $n\geq3$ and $A$ is not scalar modulo any proper ideal of $R$
(these conditions are seen to be necessary). This answers Tan's question
when $R$ is a PID.

\section{The Borobia-Tan generalisation of Fillmore's theorem}
\begin{thm}
\label{thm:=000020main}Let $R$ be an integrally closed domain with
field of fractions $K$. Let $A\in\M_{n}(R)$ with $n\geq2$ be non-scalar.
Then, for any $\gamma_{1},\dots,\gamma_{n}\in R$ such that $\gamma_{1}+\dots+\gamma_{n}=\Tr(A)$,
there exists a $B\in\M_{n}(R)$ with diagonal $(\gamma_{1},\dots,\gamma_{n})$
such that $A$ is $K$-similar to $B$. 
\end{thm}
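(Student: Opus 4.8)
The plan is to split the argument into two parts. Part~1: use the rational canonical form, together with the hypothesis that $R$ is integrally closed, to replace $A$ by a $K$-similar matrix which still lies in $\M_n(R)$ and has an off-diagonal entry equal to $1$. Part~2: show, by induction on $n$, that any matrix in $\M_n(R)$ with an off-diagonal entry a unit is \emph{$R$-similar} to a matrix with any prescribed diagonal whose entries sum to its trace. Chaining the two parts proves the theorem, since the similarity produced in Part~1 is only over $K$ while all the moves in Part~2 stay inside $\M_n(R)$.

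For Part~1: since $A$ is non-scalar, its minimal polynomial $m_A(x)$ has degree at least $2$; as $m_A$ is the largest invariant factor of $A$, the rational canonical form $C$ of $A$ has the companion matrix of $m_A$ — of size at least $2$, hence containing a $1$ immediately off the diagonal — as one of its diagonal blocks. As noted in the introduction, because $R$ is integrally closed every monic factor in $K[x]$ of $\chi_A(x)\in R[x]$ already lies in $R[x]$ (its roots are integral over $R$, so its coefficients are too, and they lie in $K$); thus every invariant factor of $A$ lies in $R[x]$ and $C\in\M_n(R)$. Since $A$ is $K$-similar to $C$, it remains only to prove the statement of Part~2, which I would record as a lemma over an arbitrary commutative ring.

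For Part~2, induct on $n\geq2$. The base case $n=2$ is a one-line computation: if, say, $A_{12}$ is a unit, conjugating by $I+tE_{21}$ with a suitable $t\in R$ (here $E_{ab}$ is the matrix unit with a single $1$ in position $(a,b)$) makes the $(1,1)$ entry equal to $\gamma_1$, and then the $(2,2)$ entry is forced to be $\Tr(A)-\gamma_1=\gamma_2$. For $n\geq3$ I would proceed as follows. First conjugate by a permutation matrix so that the off-diagonal unit lies in a position $(i,j)$ with $i,j\geq2$; this is possible precisely because $n\geq3$, which leaves room to move a unit sitting in the first row or first column into the trailing $(n-1)\times(n-1)$ block. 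Next, two elementary conjugations of the form $I+sE_{1i}$ and $I+tE_{j1}$, with $s,t\in R$ chosen so as not to disturb the entry in position $(i,j)$, first produce a unit in the first row and then set the $(1,1)$ entry equal to $\gamma_1$, all while the off-diagonal unit stays in the trailing block. The resulting matrix has $(1,1)$ entry $\gamma_1$ and trailing principal $(n-1)\times(n-1)$ submatrix $D$ with an off-diagonal unit and $\Tr(D)=\gamma_2+\dots+\gamma_n$; applying the inductive hypothesis to $D$ and conjugating the whole matrix by $\begin{pmatrix}1&0\\0&h\end{pmatrix}$ with the appropriate $h\in\GL_{n-1}(R)$ completes the step.

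The hard part is not conceptual but bookkeeping: over a field every nonzero off-diagonal entry is a unit, so one may pivot freely, whereas over $R$ one must take care to use only the genuine off-diagonal unit and to check after each conjugation that it survives, in a position that keeps the induction going. The one genuinely indispensable hypothesis is the integral closure of $R$ used in Part~1 — and, as the paper goes on to show, dropping it is exactly what permits the counterexample over $\Z[\sqrt[3]{16}]$.
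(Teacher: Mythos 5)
Your proposal is correct and follows essentially the same route as the paper: integral closedness guarantees that every monic factor of the characteristic polynomial, hence every invariant factor and the rational canonical form itself, lies over $R$, producing a $K$-similar matrix in $\M_{n}(R)$ with an off-diagonal entry $1$, after which the prescribed diagonal is achieved by elementary conjugations over $R$. The only difference is that you write out the inductive diagonal-adjustment argument explicitly, whereas the paper simply cites it as Lemmas 1.2 and 1.3 of Tan.
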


\begin{proof}
The proof is essentially identical to Tan's \cite[Section~2]{Tan_Fillmore},
but noting that the rational canonical form (under $K$-similarity)
of $A$ lies in $\M_{n}(R)$, because every monic factor of the characteristic
polynomial of $A$ lies in $R[x]$ when $R$ is integrally closed
(see \cite[Ch.~3, \S3, Theorem~5]{Zariski-Samuel-I}).
\end{proof}
We now show that the above theorem is the most general possible in
the sense that there exists a noetherian domain of dimension $1$
for which the statement fails.

Let $\alpha=\sqrt[3]{16}$ and $K=\Q(\alpha)=\Q(\sqrt[3]{2})$, so
that $K$ is the field of fractions of $\Z[\alpha]$. Since $(\frac{\alpha}{2})^{3}=2$,
the ring $\Z[\alpha]$ is not integrally closed. It is however a noetherian
domain of dimension $1$ (see \cite[(2.12)~Proposition]{Neukirch}).
Define the matrix

\[
A=\begin{pmatrix}0 & 2 & \alpha\\
2\alpha & 0 & 4\\
4 & \alpha & 0
\end{pmatrix}\in\M_{3}(\Z[\alpha]).
\]
It is checked by direct computation that $A$ satisfies the polynomial
\[
X^{2}-\frac{\alpha}{2}X-8\alpha\in K[X],
\]
which is therefore the minimal polynomial of $A$ over $K$. The matrix
$A$ appears in \cite[Section~2.1]{Brewer-Richman} and is a variant
of an earlier example in the ring $\F_{2}[[t^{2},t^{3}]]$, due to
W.~C.~Brown. The original purpose of these examples was to find
matrices over an integral domain $R$ whose minimal polynomial (over
the field of fractions) does not have all its coefficients in $R$.
\begin{prop}
The matrix $A$ is not $K$-similar to any matrix in $\M_{3}(\Z[\alpha])$
with diagonal $(1,-1,0)$. Thus Theorem~\ref{thm:=000020main} does
not hold for $R=\Z[\alpha]$ and $A$ is not $K$-similar to any matrix
in $\M_{3}(\Z[\alpha])$ that has an off-diagonal entry equal to $1$.
\end{prop}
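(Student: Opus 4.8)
The plan is to use a single $K$-similarity invariant together with the distinctive feature of this example. If $B=gAg^{-1}$ with $g\in\GL_{3}(K)$, then $B$ satisfies every polynomial that $A$ does (since $p(gAg^{-1})=g\,p(A)\,g^{-1}$ for all $p\in K[X]$); in particular, by the computation recorded above, $B$ satisfies the minimal polynomial $m(X)=X^{2}-\frac{\alpha}{2}X-8\alpha$ of $A$. The point of the example is that the coefficient $\frac{\alpha}{2}$ of $X$ in $m$ is not in $\Z[\alpha]$: writing elements of $\Z[\alpha]$ uniquely in the $\Z$-basis $1,\alpha,\alpha^{2}$ makes this immediate, and is of course just the reason $\Z[\alpha]$ fails to be integrally closed, since $(\frac{\alpha}{2})^{3}=2$.

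Concretely, I would argue by contradiction. Suppose $B\in\M_{3}(\Z[\alpha])$ is $K$-similar to $A$. From $m(B)=0$ we get
\[
B^{2}=\frac{\alpha}{2}B+8\alpha I .
\]
Since $\Z[\alpha]$ is a ring, $B^{2}\in\M_{3}(\Z[\alpha])$, and $8\alpha I\in\M_{3}(\Z[\alpha])$ as well; comparing the $(i,j)$ entries of the two sides gives
\[
\frac{\alpha}{2}\,B_{ij}=(B^{2})_{ij}-8\alpha\,\delta_{ij}\in\Z[\alpha]\qquad\text{for all }i,j .
\]
Because $\frac{\alpha}{2}\cdot 1\notin\Z[\alpha]$, this forces \emph{no entry of $B$ to be equal to $1$}.

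All three assertions are then immediate. A matrix with diagonal $(1,-1,0)$ has a $1$ on the diagonal, so it is not $K$-similar to $A$; this is the first claim, and since $1+(-1)+0=0=\Tr(A)$ it shows that Theorem~\ref{thm:=000020main} fails for $R=\Z[\alpha]$ and $n=3$. A matrix with an off-diagonal entry equal to $1$ is excluded for exactly the same reason (alternatively, one could deduce it from the first claim together with the standard fact, valid over any commutative ring, that a matrix with an off-diagonal entry $1$ is $R$-similar to one with any prescribed diagonal of the same trace). There is no real obstacle here beyond the idea of reading off the identity $B^{2}=\frac{\alpha}{2}B+8\alpha I$ at the entry where the target matrix has been prescribed the value $1$; the only computation involved is the observation that $\frac{\alpha}{2}\notin\Z[\alpha]$.
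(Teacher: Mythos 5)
Your argument is correct and rests on the same invariant as the paper's proof, namely that a monic quadratic satisfied by $A$ over $K$ has its linear coefficient outside $\Z[\alpha]$, but you execute it differently and somewhat more efficiently. The paper writes down a general $B\in\M_{3}(\Z[\alpha])$ with the prescribed diagonal $(1,-1,0)$, computes the diagonal entries of the quadratic evaluated at $B$, and extracts a contradiction from the fact that $4\alpha\pm\frac{\alpha^{2}}{2}\notin\Z[\alpha]$; it then deduces the second assertion (no off-diagonal entry equal to $1$) from the first via Lemmas 1.2 and 1.3 of Tan. You instead rearrange the relation into $B^{2}-8\alpha I\in\M_{3}(\Z[\alpha])$ being a non-integral scalar times $B$, and read this off entrywise, concluding that \emph{no} entry of any matrix in $\M_{3}(\Z[\alpha])$ that is $K$-similar to $A$ can equal $1$ (indeed no entry can be a unit). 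This yields both assertions of the proposition at once, with no computation beyond squaring and no appeal to Tan's lemmas; what the paper's route buys instead is a fully explicit verification for the particular diagonal and a demonstration of how the off-diagonal claim reduces to the diagonal one.

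One small correction, inherited from a typo in the paper's text that you quote: the quadratic actually satisfied by $A$ is $X^{2}-\frac{\alpha^{2}}{2}X-8\alpha$, not $X^{2}-\frac{\alpha}{2}X-8\alpha$. For instance $(A^{2})_{12}=\alpha^{2}$ while $A_{12}=2$, which forces the linear coefficient to be $\frac{\alpha^{2}}{2}$; the entries displayed in the paper's own computation (terms such as $\frac{\alpha^{2}}{2}a$ and $\frac{\alpha^{2}}{2}c$) confirm this. Since $\frac{\alpha^{2}}{2}=2\sqrt[3]{4}$ fails to lie in $\Z[\alpha]$ for exactly the same reason as $\frac{\alpha}{2}$ (compare against the $\Z$-basis $1,\alpha,\alpha^{2}$), your argument goes through verbatim with the corrected coefficient: if some entry of $B$ equalled $1$, then $\frac{\alpha^{2}}{2}\in\Z[\alpha]$, a contradiction.
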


\begin{proof}
Assume that $A$ is $K$-similar to a matrix
\[
B=\begin{pmatrix}1 & a & b\\
c & -1 & d\\
e & f & 0
\end{pmatrix}\in\M_{2}(\Z[\alpha]).
\]
Then certainly $B$ has the same minimal polynomial as $A$ and thus
\begin{multline*}
B^{2}-\frac{\alpha}{2}B-8\alpha\\
=\begin{pmatrix}1+ac+eb-8\alpha-\frac{\alpha^{2}}{2} & bf-\frac{\alpha^{2}}{2}a & b-\frac{\alpha^{2}}{2}b+ad\\
ed-\frac{\alpha^{2}}{2}c & ac+1+df-8\alpha+\frac{\alpha^{2}}{2} & -d-\frac{\alpha^{2}}{2}d+bc\\
cf+e-\frac{\alpha^{2}}{2}e & -f-\frac{\alpha^{2}}{2}f+ea & df+eb-8\alpha
\end{pmatrix}=0.
\end{multline*}
Considering the diagonal entries, we obtain
\[
1+ac+eb-8\alpha-\frac{\alpha^{2}}{2}=ac+1+df-8\alpha+\frac{\alpha^{2}}{2}=df+eb-8\alpha=0,
\]
so that 
\[
eb=4\alpha+\frac{\alpha^{2}}{2}\qquad\text{and}\qquad df=4\alpha-\frac{\alpha^{2}}{2}.
\]
Since $a,b\in\Z[\alpha]$ and $\frac{\alpha^{2}}{2}\not\in\Z[\alpha]$,
this is a contradiction. The second statement now follows from Lemmas
1.2 and 1.3 in \cite{Tan_Fillmore}.
\end{proof}

\section{The $R$-similarity generalisation of Fillmore's theorem over a PID}

We now consider the question of generalising Fillmore's theorem to
matrices over a ring $R$ where only $R$-similarity is allowed. Tan
\cite[Theorem~2.2]{Tan_Fillmore} proved that for a non-scalar $A=(a_{ij})\in\M_{n}(R)$
to be $R$-similar to a matrix with diagonal $(\gamma_{1},\dots,\gamma_{n})$
for all $\gamma_{i}\in R$ such that $\gamma_{1}+\dots+\gamma_{n}=\Tr(A)$,
it is necessary that the ideal
\[
\mfa=(a_{ii}-a_{jj},a_{ij}\mid1\leq i,j\leq n,\,i\neq j)
\]
is equal to the whole ring $R$.
We observe that this follows easily without explicit matrix computations:
\begin{lem}
Let $R$ be a commutative ring, $A=(a_{ij})\in\M_{n}(R)$ and $\mfa=(a_{ii}-a_{jj},a_{ij}\mid1\leq i,j\leq n,\,i\neq j)$.
Then the following two conditions are equivalent.
\begin{enumerate}
\item $\mfa=R$;
\item For all proper ideals $\mfm$ of $R$, the image of $A$ in $\M_{n}(R/\mfm)$
is not a scalar matrix.
\end{enumerate}
Moreover, if for all $\gamma_{1},\dots,\gamma_{n}\in R$ such that
$\gamma_{1}+\dots+\gamma_{n}=\Tr(A)$, $A$ is $R$-similar to a matrix
with diagonal $(\gamma_{1},\dots,\gamma_{n})$, then the second condition
holds.
\end{lem}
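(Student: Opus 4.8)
The lemma has two parts: the equivalence of (1) and (2), and the "moreover" implication. I would handle the equivalence first, since it is essentially a translation exercise, and then reduce the "moreover" part to one of the two equivalent conditions — specifically to the contrapositive of (1).

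For the equivalence $(1)\Leftrightarrow(2)$, the plan is to work with the contrapositives. Condition (1) fails iff $\mathfrak{a}$ is contained in some maximal (hence proper) ideal $\mathfrak{m}$; and $\mathfrak{a}\subseteq\mathfrak{m}$ means exactly that $a_{ij}\in\mathfrak{m}$ for all $i\neq j$ and $a_{ii}\equiv a_{jj}\pmod{\mathfrak{m}}$ for all $i,j$. But that is precisely the statement that the image of $A$ in $\M_n(R/\mathfrak{m})$ has all off-diagonal entries zero and all diagonal entries equal, i.e. is a scalar matrix. Conversely, if the image of $A$ in $\M_n(R/\mathfrak{m})$ is scalar for some proper ideal $\mathfrak{m}$, enlarging $\mathfrak{m}$ to a maximal ideal only makes the image "more scalar" (the reduction map $R/\mathfrak{m}\to R/\mathfrak{m}'$ sends a scalar matrix to a scalar matrix), so we get $\mathfrak{a}\subseteq\mathfrak{m}'\subsetneq R$; hence (1) fails. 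This gives $(1)\Leftrightarrow(2)$. The only mild subtlety is going from an arbitrary proper ideal to a maximal one, which is immediate from Zorn's lemma / the standard fact that every proper ideal lies in a maximal one.

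For the "moreover" statement, I would argue by contraposition: suppose (2) fails, so there is a proper ideal $\mathfrak{m}$ with the image $\bar A$ of $A$ in $\M_n(R/\mathfrak{m})$ scalar, say $\bar A = \lambda I$ for some $\lambda\in R/\mathfrak{m}$. Then for any $g\in\GL_n(R)$, the image of $gAg^{-1}$ in $\M_n(R/\mathfrak{m})$ is $\bar g\,(\lambda I)\,\bar g^{-1} = \lambda I$, still scalar; in particular every matrix $R$-similar to $A$ has all its diagonal entries congruent to $\lambda$ mod $\mathfrak{m}$ and all off-diagonal entries in $\mathfrak{m}$. Now I just need to produce one admissible choice of $(\gamma_1,\dots,\gamma_n)$ that violates this. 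Since $\mathfrak{m}$ is proper, $R/\mathfrak{m}\neq 0$, so pick any $t\in R$ with $t\notin\mathfrak{m}$ and set, e.g., $\gamma_1 = a_{11}+t$, $\gamma_2 = a_{22}-t$, and $\gamma_i = a_{ii}$ for $i\geq 3$ (this uses $n\geq 2$, which is in force). Then $\sum\gamma_i = \Tr(A)$, but $\gamma_1 \equiv \lambda + t \not\equiv \lambda \pmod{\mathfrak{m}}$, so no matrix with diagonal $(\gamma_1,\dots,\gamma_n)$ can be $R$-similar to $A$. This contradicts the hypothesis, so (2) must hold.

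I do not anticipate a genuine obstacle here; the result is elementary once phrased correctly. If anything, the point requiring the most care is the direction of the reduction-of-ideals argument in the equivalence (one must check scalarity is preserved under the quotient map $R/\mathfrak{m}\to R/\mathfrak{m}'$, not the other way), and making sure the chosen perturbation $(\gamma_1,\dots,\gamma_n)$ in the "moreover" part is valid for all $n\geq 2$ — which is why I perturb only the first two diagonal entries by $\pm t$ and leave the rest fixed.
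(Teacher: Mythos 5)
Your proof is correct and takes essentially the same approach as the paper: the equivalence is the same translation between $\mathfrak{a}\subseteq\mathfrak{m}$ and scalarity of the image mod $\mathfrak{m}$ (the paper just uses $\mathfrak{a}$ itself as the witnessing proper ideal, so the detour through a maximal ideal and Zorn's lemma is unnecessary), and the moreover part rests on the same observation that a scalar image mod $\mathfrak{m}$ is unchanged by conjugation, combined with the freedom to choose the prescribed diagonal. The paper phrases the last step by letting the first diagonal entry be arbitrary and concluding $R/\mathfrak{m}$ is trivial, whereas you exhibit one explicit bad diagonal; both arguments implicitly use $n\geq2$, which you rightly flag.
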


\begin{proof}
Assume that $\mfa=R$ and suppose that $\mfm$ is a proper ideal of
$R$ such that the image of $A$ in $\M_{n}(R/\mfm)$ is scalar. Then
$a_{ii}-a_{jj}\in\mfm$ and $a_{ij}\in\mfm$, for all $1\leq i,j\leq n$
such that $i\neq j$. Thus $\mfa\subseteq\mfm$, which is a contradiction.
Conversely, assume that $\mfa\neq R$. Then $\mfa$ is a proper ideal
of $R$ such that the image of $A$ in $\M_{n}(R/\mfa)$ is scalar,
so the second condition is false.

Moreover, assume that for all $\gamma_{1},\dots,\gamma_{n}\in R$
such that $\gamma_{1}+\dots+\gamma_{n}=\Tr(A)$, $A$ is $R$-similar
to a matrix with diagonal $(\gamma_{1},\dots,\gamma_{n})$. Then,
for all $a,b\in R$, $A$ is $R$-similar to a matrix with diagonal
$(a,\dots)$ and $(b,\dots)$, respectively. Let $\mfm$ be an ideal
of $R$ such that the image of $A$ in $\M_{n}(R/\mfm)$ is scalar
and let $\rho:\M_{n}(R)\rightarrow\M_{n}(R/\mfm)$ denote the homomorphism
induced by $R\rightarrow R/\mfm$. Then $\rho(gAg^{-1})=\rho(g)\rho(A)\rho(g)^{-1}=\rho(A)$
for all $g\in\GL_{n}(R)$. Thus, for all $a,b\in R$, $a\equiv b\mod{\mfm}$,
so $R/\mfm$ has only one element and therefore $\mfm$ is not proper.
\end{proof}
We now show that Tan's necessary condition is also sufficient in the
case when $R$ is a PID and $A\in\M_{n}(R)$ with $n\geq3$. The key
is the following matrix similarity result originally proved by Laffey
and Reams \cite{Laffey-Reams} over $\Z$ and generalised to the case
of PIDs in \cite[Theorem~5.6]{commutatorsPID-2016}.
\begin{lem}
\label{lem:LF-normalform}Let $R$ be a PID and $A\in\M_{n}(R)$ with
$n\geq3$ non-scalar. Then $A$ is similar to a matrix $B=(b_{ij})\in\M_{n}(R)$
such that the image of $B$ in $\M_{n}(R/(b_{12}))$ is scalar. 
\end{lem}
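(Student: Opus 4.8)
Since the lemma is the Laffey--Reams normal form for PIDs, one could simply invoke \cite[Theorem~5.6]{commutatorsPID-2016} (which rests on \cite{Laffey-Reams}); but here is the approach I would take to prove it directly. The starting point is that the ideal generated by the entries of a matrix (its \emph{content}) is invariant under $R$-similarity, and that a matrix $B$ as in the conclusion satisfies $B-b_{11}I=b_{12}C$ with $C\in\M_{n}(R)$ and $C_{12}=1$, so $C$ has content $R$; hence in any candidate $b_{12}$ is forced to be, up to a unit, a generator $d$ of the content of $A-\lambda I$, where $\lambda=b_{11}$. Conversely, if for some $\lambda\in R$ the matrix $A_{0}:=d^{-1}(A-\lambda I)$ (where $(d)$ is the content of $A-\lambda I$, so $A_{0}$ has content $R$) is $R$-similar to a matrix with a unit in some off-diagonal position, then conjugating by a permutation matrix to move that unit to position $(1,2)$ and by a diagonal matrix to rescale it to $1$ gives a matrix $A_{0}'$, and $B:=\lambda I+dA_{0}'$ does the job. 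So the lemma is equivalent to: \emph{there exists $\lambda\in R$ such that $A_{0}=d^{-1}(A-\lambda I)$ is $R$-similar to a matrix having a unit off the diagonal.}

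Next I would record the elementary criterion: a matrix $M\in\M_{n}(R)$ of content $R$ is $R$-similar to one with an off-diagonal unit if and only if there is a primitive vector $v\in R^{n}$ whose image under $M$ in $R^{n}/Rv$ is again primitive --- equivalently, $\{v,Mv\}$ spans a rank-$2$ direct summand of $R^{n}$ --- since one may then extend $v$ and $Mv$ to a basis and read off the entry $1$. Modulo a maximal ideal $\mfm$, the sole obstruction to finding such a $v$ is that $M\bmod\mfm$ be scalar. Two tasks then remain: (a) choose $\lambda$ so that $A_{0}\bmod\mfm$ is non-scalar for every maximal ideal $\mfm$; (b) assemble, when $n\geq3$, a single primitive $v$ working modulo all maximal ideals at once.

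For (a): one checks that $A_{0}\bmod\mfm$ can be scalar only if $A\bmod\mfm$ is, so the dangerous maximal ideals $(p)$ are the finitely many dividing the non-zero ideal $(a_{ii}-a_{jj},a_{ij}\mid i\neq j)$. For each such $p$ let $k$ be largest with $A$ scalar modulo $p^{k}$, say $A\equiv\mu I\pmod{p^{k}}$; choosing $\lambda\equiv\mu\pmod{p^{k}}$ makes $p^{k}$ the exact $p$-part of $d$, and then $A_{0}$ reduces modulo $p$ to a unit multiple of the image of $p^{-k}(A-\mu I)$, which is non-scalar by maximality of $k$. The Chinese Remainder Theorem combines these finitely many congruences into one $\lambda$ (this step works already for $n\geq2$). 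For (b): starting from any primitive $v_{0}$ with $A_{0}v_{0}\notin Rv_{0}$, the $2\times n$ matrix with rows $v_{0},A_{0}v_{0}$ completes to a unimodular matrix away from the finitely many maximal ideals dividing the ideal of its $2\times2$ minors; modulo each of those, $A_{0}$ is non-scalar and so has a non-eigenvector, and one corrects $v_{0}$ by these local choices --- using surjectivity of $\GL_{n}(R)\to\GL_{n}(R/I)$ --- in such a way that, for $n\geq3$, no obstruction is re-introduced at other primes.

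The main obstacle is step (b), and it is the only place where $n\geq3$ is essential; it is the technical heart of Laffey--Reams and of \cite[Theorem~5.6]{commutatorsPID-2016}. Its necessity is visible in the failure for $n=2$: there $R^{n}/Rv$ has rank $1$, so (b) asks that a single binary quadratic form attached to $A$ represent a unit, and --- replacing $A$ by $A-\lambda I$ leaves this form essentially unchanged --- this can genuinely fail. For instance $A=\left(\begin{smallmatrix}2&3\\ -2&0\end{smallmatrix}\right)$ has content-$1$ matrix $A-\lambda I$ for every $\lambda$, with associated form $2x^{2}+2xy+3y^{2}$; this form is positive definite with minimum $2$, so $A$ is not $\Z$-similar to any matrix with an off-diagonal unit, and the lemma is false in size $2$. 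When $n\geq3$ one instead has several $2\times2$ minors, and the weaker requirement that they be jointly coprime at some primitive $v$ can always be met, which is precisely what makes the patching in (b) succeed.
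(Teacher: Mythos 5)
The paper itself gives no proof of this lemma: it states it as a known result with precisely the citations you mention (Laffey--Reams over $\Z$, and \cite[Theorem~5.6]{commutatorsPID-2016} for PIDs), so your opening option of simply invoking that theorem coincides with the paper's treatment. Your preliminary reductions are also correct: the ideal generated by the entries of $A-\lambda I$ is a similarity invariant, so the conclusion is indeed equivalent to finding $\lambda\in R$ for which $A_{0}=d^{-1}(A-\lambda I)$ is similar to a matrix with an off-diagonal unit; the criterion in terms of a primitive $v$ with $v,A_{0}v$ spanning a rank-two direct summand is right; your step (a), choosing $\lambda$ by the Chinese Remainder Theorem at the finitely many primes dividing $(a_{ii}-a_{jj},a_{ij}\mid i\neq j)$ using the maximal $k$ with $A$ scalar modulo $p^{k}$, is a complete argument; and the $n=2$ example with the form $2x^{2}+2xy+3y^{2}$ is a valid illustration of why $n\geq3$ is needed.

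If, however, the sketch is meant as a self-contained proof, then step (b) is a genuine gap, and you in effect concede this by calling it ``the technical heart''. The assertion that one can correct $v_{0}$ at the finitely many bad primes ``in such a way that, for $n\geq3$, no obstruction is re-introduced at other primes'' is exactly the content of the Laffey--Reams lemma and does not follow from surjectivity of $\GL_{n}(R)\to\GL_{n}(R/I)$ together with CRT: any modification of $v_{0}$ changes the $2\times2$ minors at all primes simultaneously, and nothing in your sketch prevents a new prime from dividing all of them after the correction. This is where all the work lies and where $n\geq3$ actually enters: over $\Z$, Laffey and Reams obtain the required global control by number-theoretic input (primes in arithmetic progressions), arranging a minor to be essentially a prime that can then be handled separately, and the point of \cite[Theorem~5.6]{commutatorsPID-2016} is a different and considerably longer argument achieving the analogous control over an arbitrary PID, where such input is unavailable. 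As written, your direct argument therefore establishes the lemma only modulo the very result it sets out to reprove; either keep the citation (as the paper does) or supply the actual construction of $v$ in step (b).
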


The following result is the $R$-similarity generalisation of Fillmore's
theorem for matrices over a PID. It is a generalisation of \cite[Proposition~5.7]{commutatorsPID-2016},
where the special case of a trace zero matrix being $R$-similar to
a matrix with zero diagonal was considered (this was proved earlier
by Laffey and Reams for $R=\Z$). 
\begin{thm}
\label{thm:main-PIDs}Let $R$ be a PID and $A\in\M_{n}(R)$ with
$n\geq3$. Assume that for any proper ideal $\mfm$ of $R$, the image
of $A$ in $\M_{n}(R/\mfm)$ is not a scalar matrix. Then, for any
$\gamma_{1},\dots,\gamma_{n}\in R$ such that $\gamma_{1}+\dots+\gamma_{n}=\Tr(A)$,
$A$ is $R$-similar to a matrix with diagonal $(\gamma_{1},\dots,\gamma_{n})$.
\end{thm}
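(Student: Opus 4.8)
The strategy is induction on $n$, using Lemma~\ref{lem:LF-normalform} to reduce to a smaller matrix, exactly as in the classical proof of Fillmore's theorem but keeping careful track of which ring we work over. First, by Lemma~\ref{lem:LF-normalform}, replace $A$ by an $R$-similar matrix $B=(b_{ij})$ for which the image of $B$ in $\M_n(R/(b_{12}))$ is scalar; write $d=b_{12}$. The point of this normal form is that $d$ generates the ideal $\mfa$ of $B$ (the ideal generated by off-diagonal entries and differences of diagonal entries), and by the Lemma of the excerpt together with the hypothesis on $A$, we have $\mfa=R$, so $d$ is a \emph{unit}. Conjugating by a suitable diagonal matrix we may then arrange $b_{12}=1$.

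Once there is an off-diagonal entry equal to $1$, the usual Fillmore manoeuvre applies over $R$: conjugating by elementary matrices in $\GL_n(R)$ (which are available since the entries we use are in $R$), one can change the $(1,1)$ entry to any prescribed value $\gamma_1$ while adjusting the rest of the first row and column, keeping all entries in $R$, and — crucially — without disturbing the property that the trace of the complementary $(n-1)\times(n-1)$ block equals $\Tr(A)-\gamma_1=\gamma_2+\dots+\gamma_n$. Concretely, with $b_{12}=1$, a conjugation by $I+t E_{21}$ adds $t$ to $b_{11}$ and subtracts $t$ from $b_{22}$ (modulo lower-order corrections that stay in $R$), so one can solve for $t\in R$ to force $b_{11}=\gamma_1$; further elementary conjugations clean up the first row and column so that the matrix has block form $\begin{pmatrix}\gamma_1 & * \\ 0 & A'\end{pmatrix}$ with $A'\in\M_{n-1}(R)$ and $\Tr(A')=\gamma_2+\dots+\gamma_n$. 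This is the technical heart and is where one must be most careful that every conjugating matrix lies in $\GL_n(R)$, not merely $\GL_n(K)$.

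To close the induction one needs $A'$ to satisfy the same hypothesis: that its image in $\M_{n-1}(R/\mfm)$ is non-scalar for every proper ideal $\mfm$. The base case is $n=3$, where $A'$ is $2\times 2$ and "non-scalar mod every proper ideal'' is exactly the condition that lets one directly place $(\gamma_2,\gamma_3)$ on its diagonal (the $2\times2$ case being elementary: if $A'$ is non-scalar mod every $\mfm$, its off-diagonal entries and diagonal difference generate $R$, and a short explicit conjugation does the job). I expect the main obstacle to be precisely the propagation of the hypothesis to $A'$: it is not automatic that shrinking the matrix preserves "non-scalar modulo every proper ideal'', so some care (and possibly an extra application of Lemma~\ref{lem:LF-normalform} or a direct argument using $n-1\geq 2$) is needed to ensure the reduced matrix still has an off-diagonal entry, or diagonal difference, generating the unit ideal after the first reduction step. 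Handling this cleanly — rather than the routine elementary-conjugation bookkeeping — is the crux.
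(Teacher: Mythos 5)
Your opening step is exactly the paper's: Lemma~\ref{lem:LF-normalform} plus the hypothesis (through the equivalence in the lemma at the start of this section) shows $\mfa=(b_{12})=R$, so $b_{12}$ is a unit and can be scaled to $1$ by a diagonal conjugation. From there on, however, your proposal has a genuine gap: you identify the ``crux'' as propagating the hypothesis ``non-scalar modulo every proper ideal'' to the smaller matrix $A'$, and you leave that step unresolved. In fact that hypothesis is used exactly once, to make $b_{12}$ a unit, and is never needed again; it is the wrong invariant to carry through the induction. The invariant that does the work --- and this is precisely what the paper buys by citing Lemmas 1.2 and 1.3 of Tan, which hold over an arbitrary commutative ring --- is ``some off-diagonal entry equals $1$''. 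When the current size is at least $3$ this invariant can be preserved while placing $\gamma_1$: with $b_{12}=1$, conjugating by $I+(1-b_{32})E_{31}$ (where $E_{ij}$ is the matrix with $1$ in position $(i,j)$ and $0$ elsewhere) plants a $1$ in position $(3,2)$ while leaving the $(1,2)$ entry equal to $1$, and a further conjugation by $I+tE_{21}$ with a suitable $t\in R$ makes the $(1,1)$ entry $\gamma_1$ without disturbing either $1$. The trailing $(n-1)\times(n-1)$ block then has trace $\gamma_2+\dots+\gamma_n$, contains an off-diagonal $1$, and transforms by similarity under subsequent conjugations of the form $1\oplus g$ with $g\in\GL_{n-1}(R)$, so one inducts down to the $2\times2$ case, which is handled by a single elementary conjugation (Tan's Lemma 1.3). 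Since you explicitly leave this inductive step open, your proof is incomplete as written, although the missing piece is standard and is exactly the content of the lemmas the paper invokes.

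A smaller but concrete error in your second paragraph: you claim elementary conjugations ``clean up the first row and column'' to reach the block form $\begin{pmatrix}\gamma_1 & *\\ 0 & A'\end{pmatrix}$. That form makes $e_1$ an eigenvector with eigenvalue $\gamma_1$, so it would force the arbitrary element $\gamma_1$ to be an eigenvalue of $A$ over $R$, which is false in general. No such triangularisation is needed: fixing the $(1,1)$ entry alone already gives the trailing principal block the correct trace, and that block is all the later conjugations touch.
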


\begin{proof}
The matrix $A$ is necessarily non-scalar, so by Lemma~\ref{lem:LF-normalform}
we may assume without loss of generality that the image of $A$ in
$\M_{n}(R/(a_{12}))$ is scalar. By hypothesis, this implies that
$(a_{12})=R$, so $a_{12}$ is a unit. After conjugating $A$ by the
diagonal matrix $(a_{12}^{-1},1,\dots,1)$ to make the $(1,2)$ entry
equal to $1$, the conclusion now follows from Lemmas 1.2 and 1.3
in \cite{Tan_Fillmore}.
\end{proof}
The hypothesis $n\geq3$ in the above theorem is necessary: 
\begin{prop}
The matrix $A=\begin{pmatrix}1 & 2\\
-3 & -1
\end{pmatrix}\in\M_{2}(\Z)$ is not $\Z$-similar to a matrix in $\M_{2}(\Z)$ with diagonal $(0,0)$. 
\end{prop}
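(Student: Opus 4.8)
The claim is that $A=\begin{pmatrix}1 & 2\\ -3 & -1\end{pmatrix}$ is not $\Z$-similar to any matrix with zero diagonal. A matrix in $\M_2(\Z)$ with zero diagonal has the form $\begin{pmatrix}0 & p\\ q & 0\end{pmatrix}$, whose characteristic polynomial is $X^2-pq$. So the first step is to compute the characteristic polynomial of $A$: $\chi_A(X)=X^2-1+6=X^2+5$. Since $\Z$-similar matrices have the same characteristic polynomial, if $A$ were $\Z$-similar to a zero-diagonal matrix we would need $pq=-5$ for some $p,q\in\Z$; these are the only candidates, so it suffices to rule out $\Z$-similarity to each of $\begin{pmatrix}0 & 1\\ -5 & 0\end{pmatrix}$, $\begin{pmatrix}0 & -1\\ 5 & 0\end{pmatrix}$, $\begin{pmatrix}0 & 5\\ -1 & 0\end{pmatrix}$, $\begin{pmatrix}0 & -5\\ 1 & 0\end{pmatrix}$.

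**Reducing to a single invariant.** Rather than handle four cases, I would look for an invariant of $\Z$-similarity finer than the characteristic polynomial. The natural one: for $A$ non-scalar with irreducible (over $\Q$) characteristic polynomial, $\Z[A]\cong\Z[X]/(\chi_A)$ acts on $\Z^2$, making $\Z^2$ a module over the order $\mathcal{O}=\Z[X]/(X^2+5)$ inside $\Q(\sqrt{-5})$, and $\Z$-similarity classes of such matrices correspond to isomorphism classes of rank-one $\mathcal{O}$-modules, i.e. to the (narrow-enough) ideal class data of $\mathcal{O}$. The element $\theta=X$ acts as $A$; the point is that $A$ and $B=\begin{pmatrix}0 & p\\ q & 0\end{pmatrix}$ are $\Z$-similar iff the corresponding fractional $\mathcal{O}$-ideals are isomorphic. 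For $B$ with $pq=-5$, the lattice $\Z^2$ with $\theta\mapsto B$ is the ideal $\Z + \Z\cdot\tfrac{1}{p}\theta$ (when $p=\pm1$ this is $\mathcal{O}$ itself; when $p=\pm5$ it is $\Z+\Z\cdot\tfrac{\theta}{5}$, but $\tfrac{\theta}{5}=\tfrac{\sqrt{-5}}{5}$ is not an algebraic integer, so this is a fractional ideal isomorphic to the conductor-related module — in fact one computes it is $\cong \mathfrak{p}^{-1}$ for the prime $\mathfrak{p}=(\sqrt{-5})$, hence also principal-or-not according to whether $\mathfrak p$ is). Since $\mathcal O=\Z[\sqrt{-5}]$ is the full ring of integers of $\Q(\sqrt{-5})$, which has class number $2$ with the nonprincipal class represented by $\mathfrak p_2=(2,1+\sqrt{-5})$, I would check that the ideal attached to $A$ lands in the nontrivial class while all four matrices $B$ land in the trivial class.

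**Concretely, the computation I would do.** For $A$: the lattice $L_A=\Z e_1+\Z e_2$ with $\theta e_1 = e_1 - 3 e_2$, $\theta e_2 = 2 e_1 - e_2$; I would exhibit $L_A$ as a fractional ideal of $\Z[\sqrt{-5}]$ by finding which $\Z$-basis it has as a subset of $\Q(\sqrt{-5})$ — concretely set $e_2\mapsto 1$ and $e_1\mapsto \beta$ where $\beta$ solves $\theta\cdot 1 = 2\beta-1$, i.e. $\beta=\tfrac{\theta+1}{2}=\tfrac{1+\sqrt{-5}}{2}$, so $L_A\cong \Z + \Z\cdot\tfrac{1+\sqrt{-5}}{2}$. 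This lattice has index $2$ in $\mathcal O$-no wait, it contains $\mathcal O$ with index dividing $2$; one checks $2\cdot\tfrac{1+\sqrt{-5}}{2}=1+\sqrt{-5}\in\mathcal O$, so $L_A=\tfrac12\mathfrak p_2$ up to the usual identification, which is in the \emph{nontrivial} ideal class. By contrast each zero-diagonal $B$ yields a lattice that is (a scalar multiple of) a principal $\mathcal O$-ideal. Since isomorphic $\mathcal O$-modules give the same class, $A\not\sim_{\Z} B$ for all admissible $B$, proving the proposition.

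**Main obstacle.** The conceptual translation to ideal classes is clean, but the honest bookkeeping is in matching $\Z$-bases of abstract lattices to explicit elements of $\Q(\sqrt{-5})$ and tracking which ones are fractional versus integral ideals (the $p=\pm5$ cases, where $\tfrac{\theta}{5}\notin\mathcal O$, need the extra remark that multiplying by the principal ideal $(\sqrt{-5})$ moves it back into $\mathcal O$ as a principal ideal). An alternative, more elementary route that avoids algebraic number theory entirely: suppose $gAg^{-1}=B$ with $g=\begin{pmatrix}x & y\\ z & w\end{pmatrix}\in\GL_2(\Z)$; writing out $gA=Bg$ gives four linear equations in $x,y,z,w$ over $\Z$ together with $xw-yz=\pm1$ and $pq=-5$, and a short elimination (reducing modulo a small prime — $2$ or $5$ — turns out to force a contradiction) disposes of all cases by hand. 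I would present whichever is shorter once the details are in; the modular-elimination argument is likely the most economical for a single $2\times 2$ example, so I would lead with that and relegate the ideal-class interpretation to a remark.
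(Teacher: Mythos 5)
Your ideal-class argument is correct and takes a genuinely different route from the paper. The paper's proof is the elementary one: assuming $gAg^{-1}=B$ with zero diagonal, it works with the single representative $B=\left(\begin{smallmatrix}0&1\\-5&0\end{smallmatrix}\right)$ (legitimate, since comparing characteristic polynomials forces the off-diagonal product to be $-5$, and conjugation by $\mathrm{diag}(1,-1)$ and the coordinate swap identifies all four candidates), solves $gA=Bg$ for $g$, and finds $\det(g)=2x^{2}-2xy+3y^{2}=(x-y)^{2}+x^{2}+2y^{2}$, which is positive definite and never $\pm1$. Your main route instead uses the Latimer--MacDuffee correspondence: since $\chi_{A}=X^{2}+5$ is irreducible, $\Z$-similarity classes of matrices with this characteristic polynomial correspond to ideal classes of $\Z[\sqrt{-5}]$, and your computations check out: $A$ yields the lattice $\Z+\Z\frac{1+\sqrt{-5}}{2}=\frac{1}{2}(2,1+\sqrt{-5})$, which lies in the nonprincipal class (no element of $\Z[\sqrt{-5}]$ has norm $2$), while every zero-diagonal candidate yields $\Z+\Z\sqrt{-5}=\mathcal{O}$ or $\frac{1}{5}(5,\sqrt{-5})=\frac{1}{5}(\sqrt{-5})$, both principal; since the class number is $2$, no $\Z$-similarity is possible. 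This buys conceptual insight (it explains why the counterexample exists: the nontrivial class of $\Q(\sqrt{-5})$) at the cost of invoking Latimer--MacDuffee, whereas the paper's computation is self-contained and a few lines long. Two caveats. First, the elementary elimination you say you would lead with is exactly the paper's proof, but you have not carried it out, and your hedge ``mod $2$ or $5$'' is half wrong: modulo $2$ the relevant form is $\equiv y^{2}$, which does take the value $1$, so there is no contradiction there; modulo $5$ it is $\equiv 2(x+2y)^{2}$, which never represents $\pm1$, and that (or positive definiteness, as in the paper) is what closes the argument. Second, your four cases collapse to one by the $\GL_{2}(\Z)$-conjugacies above, which shortens either write-up.
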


\begin{proof}
Assume that $A$ is similar to a matrix in $\M_{2}(\Z)$ with diagonal
$(0,0)$. Then there exists a $g=\begin{pmatrix}x & y\\
z & w
\end{pmatrix}\in\GL_{2}(\Z)$ such that
\[
gA=\begin{pmatrix}0 & 1\\
-5 & 0
\end{pmatrix}g,
\]
that is
\[
\begin{pmatrix}x-3y & 2x-y\\
-3w+z & -w+2z
\end{pmatrix}=\begin{pmatrix}z & w\\
-5x & -5y
\end{pmatrix}.
\]
Thus $g=\begin{pmatrix}x & y\\
x-3y & 2x-y
\end{pmatrix}$ so that $\det(g)=2x^{2}-2xy+3y^{2}=(x-y)^{2}+x^{2}+2y^{2}$. Since
we must have $\det(g)=\pm1$, this implies that $y=0$ and $2x^{2}=1$,
which is a contradiction.
\end{proof}
Whether Theorem~\ref{thm:main-PIDs} holds for an integrally closed
domain $R$ that is not a PID, is an open problem.

\bibliographystyle{alex}
\bibliography{alex}

\end{document}